\newtheorem{theorem}{Theorem}%[section]
\newtheorem{proposition}[theorem]{Proposition} 
\newtheorem{lemma}[theorem]{Lemma}
\begin{document}
\title[Cartan subalgebras in continuous cores]{Absence of Cartan subalgebras in continuous cores of free product von Neumann algebras}
\author[Y. Ueda]
{Yoshimichi UEDA}
\address{
Graduate School of Mathematics, 
Kyushu University, 
Fukuoka, 819-0395, Japan
}
\email{ueda@math.kyushu-u.ac.jp}
\thanks{Supported by Grant-in-Aid for Scientific Research (C) 24540214.}
\thanks{AMS subject classification: Primary:\, 46L54;
secondary:\,46L10.}
\thanks{Keywords: Cartan subalgebra, Free product, Type III factor, Continuous core.}
\begin{abstract} 
We show that the continuous core of any type III free product factor has no Cartan subalgebra. This is a complement to previous works due to Houdayer--Ricard and Boutonnet--Houdayer--Raum. 
\end{abstract}

\maketitle

\allowdisplaybreaks{

\section{Introduction and Statement}

It is known, see \cite[Theorem 1]{FeldmanMoore:TAMS77}, that a given von Neumann algebra comes from an orbit equivalence relation if and only if it has a Cartan subalgebra (i.e., a MASA with normal conditional expectation, whose normalizer generates the whole algebra). Hence the search for Cartan subalgebras in a given von Neumann algebra is thought of as that for hidden dynamical systems producing the algebra. Therefore, it is important in view of ergodic theory to seek for Cartan subalgebras in a given von Neumann algebra.

The aim of this short note is to complement two recent important works on free product von Neumann algebras, due to Houdayer--Ricard \cite{HoudayerRicard:AdvMath11} and Boutonnet--Houdayer--Raum \cite{BoutonnetHoudayerRaum:Preprint12} establishing, among others, that {\it any} free product von Neumann algebra has no Cartan subalgebra. The work \cite{BoutonnetHoudayerRaum:Preprint12} generalizes Ioana's previous important work \cite{Ioana:Preprint12} on type II$_1$ factors to arbitrary factors. In the 90s Voiculescu  \cite{Voiculescu:GAFA96} first proved the absence of Cartan subalgebras (or more generally diffuse hyperfinite regular subalgebras) in free group factors by free entropy. In the late 00s Ozawa and Popa \cite{OzawaPopa:AnnMath10} succeeded in proving its various improved assertions by deformation/rigidity and intertwining techniques, and a couple of years later Popa and Vaes made an epoch-making work \cite{PopaVaes:ActaMath14} in the direction. The works \cite{HoudayerRicard:AdvMath11},\cite{Ioana:Preprint12},\cite{BoutonnetHoudayerRaum:Preprint12} (and thus this note too) were done under the influence of these two breakthroughs \cite{OzawaPopa:AnnMath10},\cite{PopaVaes:ActaMath14}. See \cite[\S1]{BoutonnetHoudayerRaum:Preprint12} for further historical remarks in the direction. 

\medskip
Let $M_1, M_2$ be two non-trivial (i.e., $\neq \mathbb{C}$) von Neumann algebras with separable preduals, and $\varphi_1, \varphi_2$ be faithful normal states on them, respectively. Denote by $(M,\varphi) = (M_1,\varphi_1)\star(M_2,\varphi_2)$ their free product (see e.g.~\cite[\S\S2.1]{Ueda:AdvMath11}) throughout this note. By \cite[Theorem 4.1]{Ueda:AdvMath11} the free product von Neumann algebra $M$ admits the following general structure: $M = M_d\oplus M_c$ with finite dimensional $M_d$ and diffuse $M_c$ such that $M_d$ can explicitly be calculated with possibly $M=M_c$, and moreover, such that if $(\mathrm{dim}(M_1),\mathrm{dim}(M_2)) \neq (2,2)$, then $M_c$ becomes a full factor of type II$_1$ or III$_\lambda$ ($\lambda \neq 0$) and the T-set $T(M_c)$ does the kernel of the modular action $t \in \mathbb{R} \mapsto \sigma_t^\varphi = \sigma_t^{\varphi_1}\star\sigma_t^{\varphi_2} \in \mathrm{Aut}(M)$ itself; otherwise $M_c = L^\infty[0,1]\,\bar{\otimes}\,M_2(\mathbb{C})$. This note is mainly devoted to establishing the following: 

\begin{theorem}\label{T1} If $M_c$ is of type III, then its continuous core $\widetilde{M_c} = M_c\rtimes_{\sigma^{\varphi_c}}\mathbb{R}$ {\rm(}i.e., the crossed product of $M_c$ by the modular action $t \in \mathbb{R} \mapsto \sigma_t^{\varphi_c} \in \mathrm{Aut}(M_c)${\rm)} with $\varphi_c := \varphi\!\upharpoonright_{M_c}$ does never possess any Cartan subalgebra. 
\end{theorem}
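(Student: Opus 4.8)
The plan is to reduce the statement to a property of amalgamated free products and then to run a Boutonnet--Houdayer--Raum type dichotomy on a hypothetical Cartan subalgebra. The first step is structural. Since $\sigma_t^\varphi = \sigma_t^{\varphi_1}\star\sigma_t^{\varphi_2}$ fixes the center of $M$ pointwise, the central projection $z_c$ with $M_c = Mz_c$ survives into the core, giving $\widetilde{M} = \widetilde{M_d}\oplus\widetilde{M_c}$ with $\widetilde{M_c} = z_c\widetilde{M}$. A key input, which I would invoke, is that the continuous core of a free product is itself an amalgamated free product over the common copy of $B := L(\mathbb{R})$ sitting inside each component core:
\[
\widetilde{M} = \bigl(M_1\rtimes_{\sigma^{\varphi_1}}\mathbb{R}\bigr)\star_{B}\bigl(M_2\rtimes_{\sigma^{\varphi_2}}\mathbb{R}\bigr) = \widetilde{M_1}\star_{B}\widetilde{M_2},
\]
equipped with the canonical conditional expectation onto $B$ coming from the dual weight. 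Thus $\widetilde{M_c}$ is a semifinite corner of an amalgamated free product whose amalgam $B$ is abelian, hence amenable; and since $M_c$ of type III forces $(\dim M_1,\dim M_2)\neq(2,2)$, the factor $M_c$ is full and therefore non-amenable, whence $\widetilde{M_c}$ is non-amenable as well.

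Second, suppose toward a contradiction that $A\subseteq\widetilde{M_c}$ is a Cartan subalgebra. As $\widetilde{M_c}$ is only semifinite, I would cut by a projection $q\in A$ with $\mathrm{Tr}(q)<\infty$, so that $Aq$ is a Cartan subalgebra of the finite von Neumann algebra $q\widetilde{M_c}q$ and Popa's intertwining-by-bimodules becomes available. I would then apply the amalgamated free product dichotomy to the amenable, regular subalgebra $Aq$ of the corner $q\widetilde{M_c}q$ of $\widetilde{M_1}\star_B\widetilde{M_2}$: at least one of the following must hold, namely (i) $Aq\preceq_{\widetilde{M}}B$; (ii) $q\widetilde{M_c}q\preceq_{\widetilde{M}}\widetilde{M_i}$ for some $i\in\{1,2\}$; or (iii) $q\widetilde{M_c}q$ is amenable relative to $B$ inside $\widetilde{M}$, where in the last two cases I have used that the normalizer of $Aq$ generates all of $q\widetilde{M_c}q$.

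Third, I would rule out each alternative. Since $B$ is amenable, relative amenability over $B$ in (iii) forces $\widetilde{M_c}$ to be amenable, contradicting the first step; and in case (i) a quasi-normalizer argument (using again that $Aq$ is regular in $q\widetilde{M_c}q$) upgrades the embedding $Aq\preceq_{\widetilde M} B$ to relative amenability of $q\widetilde{M_c}q$ over $B$, which is excluded in the same way. Alternative (ii) is where genuine freeness enters: because $M_1,M_2\neq\mathbb{C}$, the bimodule $L^2(\widetilde{M_i})\ominus L^2(B)$ is mixing over $B$, so no corner of the whole algebra can be intertwined into a single component $\widetilde{M_i}$. As all three alternatives are impossible, no Cartan subalgebra can exist.

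The main obstacle, I expect, is concentrated in the second and third steps rather than the first. Setting up the amalgamated free product dichotomy in the non-tracial, type II$_\infty$ setting is delicate: one must pass to finite corners, reconcile Popa's intertwining with the scaling of the trace by the dual action, and verify the relative-amenability and mixing hypotheses for the amalgam $B = L(\mathbb{R})$, which is diffuse rather than finite-dimensional. Establishing the freeness-based exclusion of alternative (ii)---that the full corner cannot sit inside one component's core---is the most substantial point, whereas the core decomposition and the non-amenability input of the first step are comparatively routine.
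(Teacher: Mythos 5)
Your reduction (pass to $M=M_c$, view $\widetilde{M}=\widetilde{M_1}\star_B\widetilde{M_2}$ over $B=\mathbb{C}1\rtimes_{\sigma^\varphi}\mathbb{R}\cong L(\mathbb{R})$, cut a hypothetical Cartan subalgebra down to a trace-finite corner, and run the Ioana--Vaes--Boutonnet--Houdayer--Raum trichotomy) is exactly the paper's frame, and your alternatives (ii) and (iii) can indeed be eliminated using that $q\widetilde{M_c}q$ has no hyperfinite direct summand. The fatal gap is your disposal of alternative (i), which is precisely the step the paper's new results (Proposition~\ref{P2} and Lemma~\ref{L3}) were created to handle. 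The implication you assert --- that $Aq$ regular in $q\widetilde{M_c}q$ together with $Aq\preceq_{\widetilde{M}}B$ can be ``upgraded by a quasi-normalizer argument'' to relative amenability of $q\widetilde{M_c}q$ over $B$ --- is false as a general principle: for $N=L^\infty(X)\rtimes\Gamma$ with $\Gamma$ non-amenable acting freely, ergodically and p.m.p., the Cartan subalgebra $A=B=L^\infty(X)$ is regular and trivially satisfies $A\preceq_N B$, yet $N$ is not amenable relative to $B$. So regularity plus intertwining into a diffuse $B$ yields nothing by itself. (In \cite{BoutonnetHoudayerRaum:Preprint12} this case is harmless only because there the Cartan subalgebra lives in $M$ itself and the target is $\mathbb{C}1$, into which no diffuse algebra can embed; in the core the target $L(\mathbb{R})$ is diffuse, which is why their theorem does not cover general Cartan subalgebras of $\widetilde{M_c}$.) The paper kills this case by a genuinely new mechanism: Proposition~\ref{P2} shows, via disintegration and the modular (KMS) condition, that $\mathcal{N}_{\widetilde{N}}(\mathbb{C}1\rtimes_{\sigma^\psi}\mathbb{R})\subseteq N_\psi\rtimes_{\sigma^\psi}\mathbb{R}$, so that $\mathbb{C}1\rtimes_{\sigma^\psi}\mathbb{R}$ is a \emph{singular} MASA whenever the centralizer is trivial; Lemma~\ref{L3} then combines this with \cite[Propositions 2.3 and 5.3]{HoudayerRicard:AdvMath11} (choosing a MASA $D\subseteq M_\varphi$, splitting along its atoms, and contradicting either singularity or the non-embedding of diffuse subalgebras of $M$ into $\mathbb{C}1\rtimes_{\sigma^\varphi}\mathbb{R}$). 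Nothing in your outline substitutes for this step.

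A secondary error: your exclusion of (ii) relies on mixing of the $\widetilde{M_i}$-$B$-bimodule $L^2(\widetilde{M_i})\ominus L^2(B)$, but the inclusion $B\subseteq\widetilde{M_i}$ is \emph{not} mixing in general, because the modular flow fixes the centralizer $(M_i)_{\varphi_i}$ pointwise: for $a\in(M_i)_{\varphi_i}$ with $\varphi_i(a)=0$, $a\neq0$, one has $E_B(a^*\lambda_t\,a)=E_B(a^*a\,\lambda_t)=\varphi_i(a^*a)\lambda_t$, which does not tend to $0$ as $t\to\infty$. Since the theorem must cover, e.g., hyperfinite $M_i$ with (semi)tracial states, where the centralizer is all of $M_i$, this hypothesis fails exactly in the cases of interest. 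The correct route for (ii) is the one packaged in the paper's ingredients (I) and (II), i.e.\ \cite[Theorem 5.2 and Proposition 5.3]{HoudayerRicard:AdvMath11} and \cite[Propositions 2.7, 2.8, Theorem 5.1, Lemma 5.2]{BoutonnetHoudayerRaum:Preprint12}, which use freeness through Ioana--Peterson--Popa-type control of normalizers together with the hyperfiniteness dichotomy on $M_1,M_2$, not a mixing property of $B\subseteq\widetilde{M_i}$.
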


Our motivations are as follows. It is well-known that if $A$ is a Cartan subalgebra in a von Neumann algebra $N$, then so is $A\rtimes_{\sigma^\psi}\mathbb{R} = A\,\bar{\otimes}\,L(\mathbb{R})$ in the continuous core $\widetilde{N} = N\rtimes_{\sigma^\psi}\mathbb{R}$ with a faithful normal state $\psi = \psi\circ E$ on $N$, where $E$ denotes the unique normal conditional expectation from $N$ onto $A$. The work \cite{BoutonnetHoudayerRaum:Preprint12} actually shows only the absence of such special Cartan subalgebras in the continuous core $\widetilde{M}$ (even when both $M_1, M_2$ are hyperfinite). Hence Theorem \ref{T1} is seemingly stronger than the original one \cite[Theorem A]{BoutonnetHoudayerRaum:Preprint12}. On the other hand, the former work \cite{HoudayerRicard:AdvMath11} shows the absence of general Cartan subalgebras in the continuous core of any type III$_1$ free Araki--Woods factor; thus the question that Theorem \ref{T1} answers affirmatively was a simple test of the problem \cite[\S\S5.4]{Ueda:AdvMath11} asking whether or not $M_c$ falls into the class of free Araki--Woods factors introduced by Shlyakhtenko \cite{Shlyakhtenko:PacificJMath97} when both $M_1, M_2$ are hyperfinite. (See \S3 for other tests.) Next, it is known that the continuous core $\widetilde{M}$ is an amalgamated free product von Neumann algebra over a \emph{diffuse} subalgebra (see \cite[Theorem 5.1]{Ueda:PacificJMath99}), and amalgamated free products over diffuse subalgebras usually behave quite differently from plain free products. Lastly, the structure theory for type III factors (see \cite[Ch.XII]{Takesaki:Book}) suggests that a preferable way of study of type III factors is to regard their continuous cores (or more preferably their discrete cores if possible) with canonical group actions as main objects rather than their associates. 

Theorem \ref{T1} and \cite[\S\S2.4]{Ueda:Preprint12-13} altogether show that the continuous/discrete cores of any `type III free product factor' have no Cartan subalgebra. Consequently, this note completes the study of proving the absence of Cartan subalgebras for arbitrary free product von Neumann algebras, though what is new is the combination of technologies provided in \cite{HoudayerRicard:AdvMath11},\cite{BoutonnetHoudayerRaum:Preprint12} with Proposition \ref{P2} provided below.    

\section{Proof} 

Keep the notation in \S1. As emphasized in \S1 the question here is about an amalgamated free product von Neumann algebra over a diffuse subalgebra. The amalgamated free product in question arises from the inclusions $M_i \rtimes_{\sigma^{\varphi_i}}\mathbb{R} \supseteq \mathbb{C}1\rtimes_{\sigma^{\varphi_i}}\mathbb{R}$, $i=1,2$, see \cite[Theorem 5.1]{Ueda:PacificJMath99}. Hence the next simple observation, which itself is of independent interest, plays a key r\^{o}le in our discussion below. In what follows, for a given (unital) inclusion $P \supseteq Q$ of von Neumann algebras we denote by $\mathcal{N}_P(Q)$ the normalizer of $Q$ in $P$, i.e., all unitaries $u \in P$ with $uQu^* = Q$. 

\begin{proposition}\label{P2} Let $N$ be a von Neumann algebra with separable predual and $\psi$ be a faithful normal positive linear functional on it. Then the normalizer $\mathcal{N}_{\widetilde{N}}(\mathbb{C}1\rtimes_{\sigma^{\psi}}\mathbb{R})$ of $\mathbb{C}1\rtimes_{\sigma^{\psi}}\mathbb{R} = \mathbb{C}1\,\bar{\otimes}\,L(\mathbb{R})$ in $\widetilde{N} = N\rtimes_{\sigma^{\psi}}\mathbb{R}$ sits inside $N_\psi\rtimes_{\sigma^{\psi}}\mathbb{R} = N_\psi\,\bar{\otimes}\,L(\mathbb{R})$, and hence $\mathcal{N}_{\widetilde{N}}(\mathbb{C}1\rtimes_{\sigma^{\psi}}\mathbb{R})$ is exactly the unitary group of $N_\psi\,\bar{\otimes}\,L(\mathbb{R})$. In particular, if the centralizer $N_\psi$ is trivial, then $\mathbb{C}1\rtimes_{\sigma^{\psi}}\mathbb{R}$ is a singular MASA in $\widetilde{N}$. 
\end{proposition}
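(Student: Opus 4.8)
The plan is to work with the crossed product $\widetilde{N} = N\rtimes_{\sigma^\psi}\mathbb{R}$ and its canonical generators. Recall $\widetilde{N}$ is generated by a copy of $N$ together with the one-parameter group of unitaries $\lambda_s$ ($s\in\mathbb{R}$) implementing the modular action, so that $\lambda_s x \lambda_s^* = \sigma_s^\psi(x)$ for $x \in N$, and the subalgebra $\mathbb{C}1\rtimes_{\sigma^\psi}\mathbb{R}$ is exactly the von Neumann algebra $L(\mathbb{R})$ generated by $\{\lambda_s : s \in \mathbb{R}\}$. The essential point I want to exploit is that the dual trace on $\widetilde{N}$ (equivalently the dual weight construction) links the modular theory of $N$ to the spatial structure of the core. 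Concretely, the centralizer $N_\psi$ is precisely the fixed-point algebra of the modular action, i.e.\ the set of $x \in N$ commuting with every $\lambda_s$; thus $N_\psi \,\bar\otimes\, L(\mathbb{R})$ is exactly the relative commutant structure adapted to $L(\mathbb{R})$.

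First I would take a unitary $u \in \mathcal{N}_{\widetilde{N}}(L(\mathbb{R}))$ and record that it normalizes $L(\mathbb{R})$, so in particular $u \lambda_s u^*$ lies in $L(\mathbb{R})$ for every $s$. The strategy is to show that conjugation by $u$ must fix $L(\mathbb{R})$ pointwise up to the expected freedom, which forces $u$ into the commutant-type algebra $N_\psi\,\bar\otimes\,L(\mathbb{R})$. A clean way to organize this is via the dual weight/trace $\widetilde{\psi}$ on $\widetilde{N}$ and the canonical expectation $E_{L(\mathbb{R})}$ onto $L(\mathbb{R})$: since $L(\mathbb{R}) = \mathbb{C}1\,\bar\otimes\,L(\mathbb{R})$ is a central-type piece, one has a well-behaved conditional expectation, and normalizing unitaries interact rigidly with it. I would expand $u$ in its Fourier-type decomposition relative to the crossed-product structure (writing elements of $\widetilde{N}$ via their $N$-valued Fourier coefficients against $\lambda_s$) and use the normalization condition $u\lambda_s u^* \in L(\mathbb{R})$ to constrain those coefficients.

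The cleanest route, which I expect to be the one that actually works, is the following. Because $\mathbb{C}1\rtimes_{\sigma^\psi}\mathbb{R}=\mathbb{C}1\,\bar\otimes\,L(\mathbb{R})$ is generated by the $\lambda_s$, normalizing it is equivalent to the condition that $\mathrm{Ad}(u)$ restricts to an automorphism of $L(\mathbb{R})$. The automorphism group of $L(\mathbb{R})=L^\infty(\widehat{\mathbb{R}})$ that can arise this way is severely limited inside $\widetilde{N}$: the dual action of $\widehat{\mathbb{R}}$ scales the trace, and any inner perturbation that preserves $L(\mathbb{R})$ as a set while respecting the trace-scaling must commute with the relevant spectral projections of the $\lambda_s$. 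I would argue that $u$ commutes with each $\lambda_s$: indeed $u\lambda_su^*$ depends continuously and multiplicatively on $s$, so it defines another one-parameter subgroup of unitaries in $L(\mathbb{R})$, hence is of the form $\mu_s\lambda_{c(s)}$ for a character $\mu$ and a scalar $c$; comparing the action on $N$ (via the defining relation of the crossed product and the faithfulness of $\psi$) pins down $c(s)=s$ and absorbs $\mu$, yielding $u\lambda_s u^* = \lambda_s$ for all $s$. Then $u$ lies in the relative commutant $L(\mathbb{R})'\cap\widetilde{N}$, which by a standard computation of commutants in crossed products equals $N_\psi\,\bar\otimes\,L(\mathbb{R})$.

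Finally I would assemble the pieces: the above shows $\mathcal{N}_{\widetilde{N}}(L(\mathbb{R}))$ is contained in the unitary group of $N_\psi\,\bar\otimes\,L(\mathbb{R})$, and the reverse containment is immediate since every unitary in $N_\psi\,\bar\otimes\,L(\mathbb{R})$ commutes with $L(\mathbb{R})$ and hence normalizes it. The last assertion is then a formality: if $N_\psi=\mathbb{C}1$, then $N_\psi\,\bar\otimes\,L(\mathbb{R})=L(\mathbb{R})=\mathbb{C}1\rtimes_{\sigma^\psi}\mathbb{R}$, so the normalizer of $\mathbb{C}1\rtimes_{\sigma^\psi}\mathbb{R}$ is just its own unitary group, which means it is a singular MASA (maximality following because the relative commutant coincides with the algebra itself when $N_\psi$ is trivial). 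The step I expect to be the genuine obstacle is the rigidity claim that $u\lambda_s u^*=\lambda_s$ rather than merely landing in $L(\mathbb{R})$; making the character/scalar bookkeeping rigorous — and in particular ruling out nontrivial automorphisms of $L(\mathbb{R})$ coming from the dual action — is where the faithfulness of $\psi$ and the precise crossed-product relations must be used carefully, and it is the heart of the argument.
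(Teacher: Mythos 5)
There is a genuine gap, and it sits exactly at the step you yourself flag as ``the heart of the argument.'' Your pivotal claim is that $s\mapsto u\lambda_s u^*$, being a continuous one-parameter unitary group inside $L(\mathbb{R})$, must have the form $\mu_s\lambda_{c(s)}$ for a character $\mu$ and a scalar reparametrization $c$. This is false. Under the Fourier transform, $L(\mathbb{R})\cong L^\infty(\mathbb{R})$ and $\lambda_s$ becomes the multiplication operator $e^{is(\,\cdot\,)}$; by Stone's theorem a continuous one-parameter unitary group in this abelian algebra is $e^{ish(\,\cdot\,)}$ for an \emph{arbitrary} real-valued measurable function $h$, and only the affine choices $h(x)=cx+a$ give your form $\mu_s\lambda_{cs}$. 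A unitary normalizing $L(\mathbb{R})$ implements, via this identification, a non-singular point transformation $\alpha$ of $\mathbb{R}$, and then $u\lambda_su^*$ corresponds to $e^{is\alpha^{-1}(\,\cdot\,)}$; asserting your special form amounts to assuming $\alpha$ is affine, which is essentially the whole content of the proposition. Nor can this be repaired by the ``trace-scaling/dual action bookkeeping'' you suggest: conjugation by any unitary of $\widetilde{N}$ preserves the canonical trace, but the measure on $\mathbb{R}$ induced by $\mathrm{Tr}$ admits plenty of non-affine measure-preserving transformations, so trace considerations alone cannot force $\alpha=\mathrm{id}$.

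The paper closes precisely this gap with an analytic argument you do not have. After the Fourier transform it writes $u=w(1\otimes u_\alpha)$, where $u_\alpha$ implements the point transformation and $w$ is a unitary in $N\,\bar{\otimes}\,L^\infty(\mathbb{R})$ (the commutant of $\mathbb{C}1\,\bar{\otimes}\,L^\infty(\mathbb{R})$, since $L^\infty(\mathbb{R})$ is a MASA in $B(L^2(\mathbb{R}))$). Invariance of $u$ under $\sigma^\psi\,\bar{\otimes}\,\mathrm{Ad}v$ then yields, after disintegrating $w=\int_\mathbb{R}^\oplus w(s)\,ds$, that almost every fiber $w(s)$ is a \emph{unitary} eigenvector of the modular group: $\sigma_t^\psi(w(s))=e^{it(\alpha^{-1}(s)-s)}w(s)$. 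The KMS (modular) condition applied to the entire extension $z\mapsto e^{iz(\alpha^{-1}(s)-s)}w(s)$ gives $\psi(1)=e^{s-\alpha^{-1}(s)}\psi(1)$, forcing $\alpha^{-1}(s)=s$ and $w(s)\in N_\psi$; here the finiteness and faithfulness of the positive functional $\psi$ are used in an essential way. This fiberwise KMS rigidity is the missing idea in your proposal: without it, nothing rules out normalizing unitaries implementing wild non-singular transformations of the spectrum. (Your final reduction --- once $u$ commutes with $L(\mathbb{R})$, conclude $u\in N_\psi\,\bar{\otimes}\,L(\mathbb{R})$ --- is correct in spirit, but even that relative-commutant computation is proved in the paper by the same disintegration technique rather than being a citable triviality.)
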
   
\begin{proof} The discussion below follows the idea of the proof of \cite[Theorem 2.1]{NeshveyevStormer:JFA02}, but the key is the so-called modular condition instead. Let $\rho : \mathbb{R} \curvearrowright L^2(\mathbb{R})$ be the `right' regular representation, i.e., $\rho_t = \lambda_{-t}$, $t \in \mathbb{R}$, with the usual `left' regular representation $\lambda : \mathbb{R} \curvearrowright L^2(\mathbb{R})$. It is standard, see e.g.~\cite[Theorem 3.11]{vanDaele:LMSLectureNoteSer31}, that $N\rtimes_{\sigma^{\psi}}\mathbb{R} \supseteq \mathbb{C}1\rtimes_{\sigma^{\psi}}\mathbb{R}$ is identical to 
$(N\,\bar{\otimes}\,B(L^2(\mathbb{R})))^{(\sigma^\psi\,\bar{\otimes}\,\mathrm{Ad}\rho,\mathbb{R})} \supseteq \mathbb{C}1\,\bar{\otimes}\,L(\mathbb{R})$, which is conjugate to 
\begin{equation}\label{Eq1}
(N\,\bar{\otimes}\,B(L^2(\mathbb{R})))^{(\sigma^\psi\,\bar{\otimes}\,\mathrm{Ad}v,\mathbb{R})} \supseteq \mathbb{C}1\,\bar{\otimes}\,L^\infty(\mathbb{R}) 
\end{equation}
by taking the Fourier transform on the second component, where $L^\infty(\mathbb{R})$ acts on $L^2(\mathbb{R})$ by multiplication and the $v_t$, $t \in \mathbb{R}$, are the unitary elements in $L^\infty(\mathbb{R})$ defined to be $v_t(s) := e^{its}$, $s \in \mathbb{R}$. Hence it suffices to work with the inclusion \eqref{Eq1} instead of the original inclusion.

Let $u \in (N\,\bar{\otimes}\,B(L^2(\mathbb{R})))^{(\sigma^\psi\,\bar{\otimes}\,\mathrm{Ad}v,\mathbb{R})}$ be a unitary element such that $u(\mathbb{C}1\,\bar{\otimes}\,L^\infty(\mathbb{R}))u^* = \mathbb{C}1\,\bar{\otimes}\,L^\infty(\mathbb{R})$. By \cite[Appendix IV]{Dixmier:Book} (together with a trick used in the proof of \cite[Theorem 17.41]{Kechris:GTM156} if necessary) one can choose a non-singular Borel bijection $\alpha : \mathbb{R} \to \mathbb{R}$ in such a way that $u(1\otimes f)u^* = 1\otimes(f\circ\alpha^{-1}) = 1\otimes u_\alpha f u_\alpha^*$ for every $f \in L^\infty(\mathbb{R})$, where $(u_\alpha g)(s) = [(dm\circ\alpha^{-1}/dm)(s)]^{1/2} g(\alpha^{-1}(s))$, $g \in L^2(\mathbb{R})$ with the Lebesgue measure $m(ds) = ds$. Set $w := u(1\otimes u_\alpha^*)$, a unitary element in $(N\,\bar{\otimes}\,B(L^2(\mathbb{R}))) \cap (\mathbb{C}1\,\bar{\otimes}\,L^\infty(\mathbb{R}))' = N\,\bar{\otimes}\,L^\infty(\mathbb{R})$ by \cite[Theorem IV.5.9; Corollary IV.5.10]{Takesaki:Book}, since $L^\infty(\mathbb{R})$ is a MASA in $B(L^2(\mathbb{R}))$. Since $(\sigma_t^\psi\,\bar{\otimes}\,\mathrm{Ad}v_t)(u) = u$, for every $t \in \mathbb{R}$ one has $w = (\sigma_t^\psi\,\bar{\otimes}\,\mathrm{id})(w) (1\otimes e^{it((\,\cdot\,)-\alpha^{-1}(\,\cdot\,))})$; hence 
\begin{equation}\label{Eq2} 
(\sigma_t^\psi\,\bar{\otimes}\,\mathrm{id})(w) = (1\otimes e^{it(\alpha^{-1}(\,\cdot\,)-(\,\cdot\,))})w.   
\end{equation}
Since the standard Hilbert space $\mathcal{H}:=L^2(N)$ is separable (see e.g.~\cite[Lemma 1.8]{Yamagami:JMSJ59}), we can appeal to the disintegration 
\begin{equation}\label{Eq3}
\mathcal{H}\,\bar{\otimes}\,L^2(\mathbb{R}) = \int_\mathbb{R}^\oplus \mathcal{H}(s)\,ds, \quad N\,\bar{\otimes}\,L^\infty(\mathbb{R}) = \int_\mathbb{R}^\oplus N(s)\,ds  
\end{equation} 
with the constant fields $\mathcal{H}(s) = \mathcal{H}$, $N(s) = N$ (see e.g.~\cite[Part II, Ch.~3, \S4; Corollary of Proposition 3]{Dixmier:Book}). Thus we can write $w = \int_\mathbb{R}^\oplus w(s)\,ds$ and choose $s \mapsto w(s)$ as a measurable field of unitary elements in $N$ (see e.g.~\cite[Part II, Ch.~2, p.183]{Dixmier:Book}). By the identification \eqref{Eq3} we observe that 
$$
(\sigma_t^\psi\,\bar{\otimes}\,\mathrm{id})(w) = (\Delta_\psi^{it}\otimes1)w(\Delta_\psi^{-it}\otimes1) = \int_\mathbb{R}^\oplus \Delta_\psi^{it}w(s)\Delta_\psi^{-it}\,ds = \int_\mathbb{R}^\oplus \sigma_t^\psi(w(s))\,ds, 
$$
where $\Delta_\psi$ is the modular operator associated with $\psi$. Therefore, the identity \eqref{Eq2} is translated into 
$$
\int_\mathbb{R}^\oplus \sigma_t^\psi(w(s))\,ds = \int_\mathbb{R}^\oplus e^{it(\alpha^{-1}(s)-s)}w(s)\,ds. 
$$
This implies, by e.g.~\cite[Part II, Ch.~2, \S3, Corollary of Proposition 2]{Dixmier:Book}, that there exists a co-null subset $S$ of $\mathbb{R}$ so that for every $s \in S$ one has $\sigma_t^\psi(w(s)) = e^{it(\alpha^{-1}(s)-s)}w(s)$ for all rational numbers $t$ and hence for all $t \in \mathbb{R}$ by continuity. Therefore, for all $s \in S$ $t \mapsto \sigma_t^\psi(w(s))$ has an entire extension $\sigma_z^\psi(w(s)) := e^{iz(\alpha^{-1}(s)-s)}w(s)$ so that by the modular condition, e.g.~\cite[Exercise VIII.2.2]{Takesaki:Book}, we have 
$$
\psi(1) = \psi(w(s)^* w(s)) = \psi(\sigma_i^\psi(w(s))w(s)^*) = e^{(s-\alpha^{-1}(s))}\psi(w(s)w(s)^*) = e^{(s-\alpha^{-1}(s))}\psi(1), 
$$
implying that $\alpha^{-1}(s) = s$ and $w(s) \in N_\psi$. Thanks to \cite[Part II, Ch.~3, \S1, Theorem 1]{Dixmier:Book} we conclude that 
$$
u = w = \int_\mathbb{R}^\oplus w(s)\,ds \in \int_\mathbb{R}^\oplus N_\psi(s)\,ds = N_\psi\,\bar{\otimes}\,L^\infty(\mathbb{R})
$$
with the constant field $N_\psi(s) = N_\psi$. This immediately implies the desired assertion.    
\end{proof}

Let us start proving Theorem \ref{T1}. In what follows, $\mathrm{Tr}$ stands for the canonical trace on $\widetilde{M} = M\rtimes_{\sigma^\varphi}\mathbb{R}$, which the so-called dual action scales. (See \cite[Ch.~XII]{Takesaki:Book}.) We need to recall a central notation of intertwining techniques, initiated by Popa, in the present setup. Let $P, Q$ be (not necessarily unital) von Neumann subalgebras of $\widetilde{M}$ such that $\mathrm{Tr}(1_P)$ is finite and $\mathrm{Tr}\!\upharpoonright_Q$ still semifinite. We write $P \preceq_{\widetilde{M}} Q$ if there exist a non-zero projection $q \in Q$ with $\mathrm{Tr}(q)$ finite, a natural number $n$, a (possibly non-unital) normal $*$-homomorphism $\pi$ from $P$ into the $n\times n$ matrices over $qQq$, and a non-zero partial isometry $y$ as a $1\times n$ matrix over $1_P\widetilde{M}q$ such that $xy = y\pi(x)$ holds for every $x \in P$. See \cite[Lemma 2.2]{HoudayerRicard:AdvMath11},\cite[Lemma 2.3]{BoutonnetHoudayerRaum:Preprint12} (due to Vaes) for its equivalent conditions. The main, necessary ingredients from \cite{HoudayerRicard:AdvMath11},\cite{BoutonnetHoudayerRaum:Preprint12} are in order. 
 
 \medskip
(I) {\it Assume that both $M_1, M_2$ are hyperfinite.} As pointed out in \cite[\S\S5.2]{HoudayerRicard:AdvMath11} Theorem 5.2 of the same paper still holds true in the present setup. The proof is basically same, and finally arrives at the main argument in the proof of \cite[Theorem 3.5]{HoudayerShlyakhtenko:IMRN11} re-organizing several arguments from \cite{OzawaPopa:AnnMath10},\cite{OzawaPopa:AmerJMath10}. However, one has to replace \cite[Theorem A]{HoudayerRicard:AdvMath11} and \cite[Theorem 4.3]{HoudayerRicard:AdvMath11} (with the `free malleable deformation') by \cite[Theorem 4.8]{RicardXu:Crelle06} and the proof of \cite[Theorem 4.2]{ChifanHoudayer:DukeMathJ10} 
%(precisely the argument in p.37--39 there) 
(with Ioana--Peterson--Popa's original malleable deformation), respectively. Hence the consequence becomes as follows. Let $p \in \mathbb{C}1\rtimes_{\sigma^{\varphi}}\mathbb{R}$ be a non-zero projection with $\mathrm{Tr}(p)$ finite. Let $P \subseteq p\widetilde{M}p$ be a (not necessarily unital) hyperfinite von Neumann subalgebra. If $P \not\preceq_{\widetilde{M}} \widetilde{M_i} = M_i \rtimes_{\sigma^{\varphi_i}}\mathbb{R}$ ($\hookrightarrow \widetilde{M}$ canonically) for all $i$, then $\mathcal{N}_{1_P \widetilde{M}1_P}(P)''$ is hyperfinite. (This statement itself does not use the full power of the assumption that both $M_1, M_2$ are hyperfinite.) Then, \cite[Proposition 2.7]{BoutonnetHoudayerRaum:Preprint12} with the hyperfiniteness of $M_1, M_2$ gives the necessary consequence: {\it If $\mathcal{N}_{1_P\widetilde{M}1_P}(P)''$ has no hyperfinite direct summand, then $P\preceq_{\widetilde{M}} \mathbb{C}1\rtimes_{\sigma^\varphi}\mathbb{R}$.}  

(II) {\it Assume that either $M_1$ or $M_2$ has no hyperfinite direct summand. Let $p \in \mathbb{C}1\rtimes_{\sigma^{\varphi}}\mathbb{R}$ be a non-zero projection with $\mathrm{Tr}(p)$ finite, and let $P$ be a unital regular hyperfinite von Neumann subalgebra in $p\widetilde{M}p$. Then, \cite[Proposition 2.8, Theorem 5.1, Lemma 5.2]{BoutonnetHoudayerRaum:Preprint12} altogether show that $P \preceq_{\widetilde{M}} \mathbb{C}1\rtimes_{\sigma^\varphi}\mathbb{R}$.} 

\medskip
As explained in \cite[\S\S2.1]{Ueda:Preprint12-13} we may and do assume $M = M_c$ after cutting $M$ by a suitable central projection of either $M_1$ or $M_2$ if necessary. Moreover, when $M_i$ is not hyperfinite, the same trick enables us to assume that $M_i$ indeed has no hyperfinite direct summand.  

\medskip
Suppose, on the contrary, that there exists a Cartan subalgebra $Q$ in $\widetilde{M}$. Let $q \in \mathbb{C}1\rtimes_{\sigma^\varphi}\mathbb{R}$ be a non-zero projection with $\mathrm{Tr}(q)$ finite. Since $\mathrm{Tr}\!\upharpoonright_Q$ must be semifinite (see e.g.~\cite[Lemma V.7.11]{Takesaki:Book}) and $Q$ diffuse, we may and do assume, by conjugating $Q$ by a unitary, that $q$ falls in $Q$. Remark that $C := Qq$ is also a Cartan subalgebra in $q\widetilde{M}q$ (see e.g.~\cite[Lemma 4.1 (i)]{Voeden:PLMS73} and \cite[Proposition 2.7]{HoudayerRicard:AdvMath11}). Here, we have known by \cite[Theorem 4.1]{Ueda:AdvMath11} that $M = M_c$ is a non-hyperfinite factor of type III, and hence it is standard, see e.g.~\cite[Proposition 2.8]{BoutonnetHoudayerRaum:Preprint12}, that $q\widetilde{M}q$ has no hyperfinite direct summand. Applying the above (I) or (II) to $p:=q$ and $P:=C$ we have $C \preceq_{\widetilde{M}} \mathbb{C}1\rtimes_{\sigma^\varphi}\mathbb{R}$. Then a contradiction will occur once we prove the next general lemma, which holds true for arbitrary von Neumann algebras $M$ and enables us to avoid the `case-by-case' proof of \cite[Theorem D (1)]{HoudayerRicard:AdvMath11} (see the proof of Proposition \ref{P4}).    

\begin{lemma}\label{L3} Let $p \in \widetilde{M}$ be a non-zero projection with $\mathrm{Tr}(p)$ finite, and let $A$ be a MASA in $p\widetilde{M}p$. If either the centralizer $M_\varphi$ is diffuse or $\mathcal{N}_{p\widetilde{M}p}(A)''$ has no type I direct summand, then $A \not\preceq_{\widetilde{M}} \mathbb{C}1\rtimes_{\sigma^\varphi}\mathbb{R}$.  
\end{lemma}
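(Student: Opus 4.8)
The plan is to argue by contradiction: assuming $A\preceq_{\widetilde{M}}B$ with $B:=\mathbb{C}1\rtimes_{\sigma^\varphi}\mathbb{R}=\mathbb{C}1\,\bar{\otimes}\,L(\mathbb{R})$, I will show that this alone forces the centralizer $M_\varphi$ to be trivial, and then rule that out under either hypothesis. The one external input beyond the intertwining calculus of \cite{HoudayerRicard:AdvMath11},\cite{BoutonnetHoudayerRaum:Preprint12} is Proposition \ref{P2}. Writing $T:=M_\varphi\,\bar{\otimes}\,L(\mathbb{R})$, Proposition \ref{P2} gives $\mathcal{N}_{\widetilde{M}}(B)''=T$; since $\mathbb{C}1\,\bar{\otimes}\,L(\mathbb{R})\subseteq Z(T)$, every unitary of $T$ normalizes $B$, while conversely every unitary commuting with $B$ normalizes it, so that $B'\cap\widetilde{M}=\mathcal{N}_{\widetilde{M}}(B)''=T$. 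This computation of the relative commutant is the key I will exploit.

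First I would reduce the intertwining to a single partial isometry. As $A$ is abelian (being a MASA) and $B$ is abelian, the equivalent formulations in \cite[Lemma 2.2]{HoudayerRicard:AdvMath11},\cite[Lemma 2.3]{BoutonnetHoudayerRaum:Preprint12}, together with the routine reduction of an abelian-into-abelian intertwiner to the case $n=1$, furnish a non-zero partial isometry $v\in p\widetilde{M}$ with $vv^*=:e\in A'\cap p\widetilde{M}p=A$, with $v^*v=:f_0\in B$, and with $v^*Av\subseteq B$. Then $\mathrm{Ad}(v^*)\colon e\widetilde{M}e\to f_0\widetilde{M}f_0$ is a $*$-isomorphism carrying the MASA $Ae$ of $e\widetilde{M}e$ onto $v^*Av$; hence $v^*Av$ is a MASA of $f_0\widetilde{M}f_0$ contained in the abelian algebra $f_0Bf_0$, and by maximality $v^*Av=f_0Bf_0$.

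Next I would take relative commutants inside $f_0\widetilde{M}f_0$. Since $f_0\in B$ one has $(f_0Bf_0)'\cap f_0\widetilde{M}f_0=f_0(B'\cap\widetilde{M})f_0=f_0Tf_0$, whereas $v^*Av$ being a MASA gives $(v^*Av)'\cap f_0\widetilde{M}f_0=v^*Av=f_0Bf_0$. Comparing the two yields $f_0Bf_0=f_0Tf_0$. Writing $f_0=1\otimes g$ for a projection $g\in L(\mathbb{R})$ supported on a set $E$ of finite Lebesgue measure, this reads $\mathbb{C}1\,\bar{\otimes}\,L^\infty(E)=M_\varphi\,\bar{\otimes}\,L^\infty(E)$, and as $g\neq0$ it forces $M_\varphi=\mathbb{C}1$. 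Thus $A\preceq_{\widetilde{M}}B$ already implies $M_\varphi=\mathbb{C}1$, in which case Proposition \ref{P2} moreover shows that $B$ is a singular MASA in $\widetilde{M}$.

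Finally I would settle the two cases. If $M_\varphi$ is diffuse then $M_\varphi\neq\mathbb{C}1$, so the conclusion above is absurd and $A\not\preceq_{\widetilde{M}}B$. If instead $\mathcal{N}_{p\widetilde{M}p}(A)''$ has no type I direct summand, I would invoke the standard normalizer-intertwining principle — for a MASA $A$, $A\preceq_{\widetilde{M}}B$ upgrades to $\mathcal{N}_{p\widetilde{M}p}(A)''\preceq_{\widetilde{M}}\mathcal{N}_{\widetilde{M}}(B)''$ — which together with the singularity $\mathcal{N}_{\widetilde{M}}(B)''=B$ just obtained gives $\mathcal{N}_{p\widetilde{M}p}(A)''\preceq_{\widetilde{M}}B$. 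As $B$ is abelian, intertwining into it produces a non-zero central projection $z$ with $\mathcal{N}_{p\widetilde{M}p}(A)''z$ of type I, contradicting the hypothesis. In either case $A\not\preceq_{\widetilde{M}}B$, as claimed.

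I expect the conceptual heart to be the maximality step that forces $v^*Av$ to exhaust the whole relative commutant $f_0Tf_0$ and thereby collapse $M_\varphi$ to $\mathbb{C}1$; the main technical obstacles will be the clean reduction to an $n=1$ intertwiner $v$ with $vv^*\in A$, $v^*v\in B$, $v^*Av\subseteq B$, and, in the second case, the justification of the normalizer-intertwining upgrade for the MASA $A$.
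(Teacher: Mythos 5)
You have a genuine gap at the very first step, and the intermediate claim that step is meant to establish is in fact false. The reduction of $A\preceq_{\widetilde{M}}B$ to a single partial isometry $v$ with $vv^*\in A$, $v^*v\in B$ and $v^*Av\subseteq B$ is Popa's intertwining lemma for MASAs (this is the \cite[Proposition 2.3]{HoudayerRicard:AdvMath11} that the paper invokes), and it requires the \emph{target} $B$ to be maximal abelian in $\widetilde{M}$. Your target $B=\mathbb{C}1\rtimes_{\sigma^\varphi}\mathbb{R}$ has relative commutant $B'\cap\widetilde{M}=M_\varphi\,\bar{\otimes}\,L(\mathbb{R})$ (as you yourself note), so it is a MASA precisely when $M_\varphi=\mathbb{C}1$ --- which is what you are trying to prove; the argument is circular. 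When $B$ is not maximal abelian, an intertwiner can twist into $B'\cap\widetilde{M}$ and no such $v$ need exist. Concretely: take $M=M_2(\mathbb{C})$, $\varphi=\mathrm{tr}(h\,\cdot\,)$ with $h=\mathrm{diag}(\mu_1,\mu_2)$, $\mu_1\neq\mu_2$, so that $M_\varphi=\mathbb{C}\oplus\mathbb{C}$. Since $\sigma_t^\varphi=\mathrm{Ad}(h^{it})$ is inner, after the Fourier transform one has $\widetilde{M}\cong M_2(\mathbb{C})\,\bar{\otimes}\,L^\infty(\mathbb{R})$ and $\mathbb{C}1\rtimes_{\sigma^\varphi}\mathbb{R}$ becomes $B=\{s\mapsto\mathrm{diag}(g(s-\log\mu_1),g(s-\log\mu_2)):g\in L^\infty(\mathbb{R})\}$. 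For a bounded Borel set $E$ of positive measure, $p:=e_{11}\otimes\chi_E$ is a non-zero finite-trace projection and $A:=p\widetilde{M}p=\mathbb{C}e_{11}\otimes L^\infty(E)$ is a MASA in $p\widetilde{M}p$; taking $y:=p$ and $\pi(e_{11}\otimes f):=$ the element of $B$ with $g=f(\,\cdot\,+\log\mu_1)\chi_{E-\log\mu_1}$, one checks $xy=y\pi(x)$ for all $x\in A$, with $q:=\pi(p)\in B$ of finite trace. Hence $A\preceq_{\widetilde{M}}\mathbb{C}1\rtimes_{\sigma^\varphi}\mathbb{R}$ (even with $n=1$) although $M_\varphi\neq\mathbb{C}1$. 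So intertwining into $\mathbb{C}1\rtimes_{\sigma^\varphi}\mathbb{R}$ does \emph{not} collapse the centralizer; in this example neither hypothesis of the lemma holds, so the lemma itself is untouched, but your claimed implication fails, and with it both of your cases --- in particular the singularity of $B$ and the ``normalizer-intertwining upgrade'' in your second case (which again presupposes a MASA target) are unavailable.

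This failure is exactly what the paper's proof is engineered to avoid: it enlarges the target to $B=D\,\bar{\otimes}\,L(\mathbb{R})$ with $D$ a MASA in $M_\varphi$, which \emph{is} a MASA in $\widetilde{M}$ (Proposition \ref{P2} or \cite[Proposition 2.4]{HoudayerRicard:AdvMath11}), so that the partial-isometry reduction is legitimate; it then splits according to the atomic and diffuse parts of $D$, handling the atomic directions via the singularity statement of Proposition \ref{P2} applied in the corners $e_kMe_k$, and the diffuse direction via \cite[Proposition 5.3]{HoudayerRicard:AdvMath11}, the structural fact that a diffuse von Neumann subalgebra of $M$ itself never embeds into $\mathbb{C}1\rtimes_{\sigma^\varphi}\mathbb{R}$ inside the core. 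Your proposal contains no ingredient of this last kind, and the counterexample above shows that the lemma cannot be reached by Proposition \ref{P2} plus abstract MASA/intertwining manipulations alone.
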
 
\begin{proof}
Choose a MASA $D$ in $M_\varphi$. It is plain to see that if $M_\varphi$ is diffuse, then so is $D$.  By \cite[Proposition 2.4]{HoudayerRicard:AdvMath11} (or Proposition \ref{P2}) $B := D\rtimes_{\sigma^\varphi}\mathbb{R} = D\,\bar{\otimes}\,L(\mathbb{R})$ becomes a MASA in $\widetilde{M}$. Suppose, on the contrary, that $A \preceq_{\widetilde{M}} \mathbb{C}1\rtimes_{\sigma^\varphi}\mathbb{R}$ ($\subseteq B$). Since $A$ and $B$ are MASAs in $p\widetilde{M}p$ and $\widetilde{M}$, respectively, \cite[Proposition 2.3]{HoudayerRicard:AdvMath11} with its proof ensures that there exists a non-zero partial isometry $v \in \widetilde{M}$ such that $vv^* \in A$, $v^* v \in B$, $v^* A v = Bv^* v$ and $A vv^* \preceq_{\widetilde{M}} \mathbb{C}1\rtimes_{\sigma^\varphi}\mathbb{R}$. Choose a maximal, orthogonal family of minimal projections $e_1,e_2,\dots$ of $D$ ({\it n.b.}, this family is empty when $M_\varphi$ is diffuse). Set $e_0 := 1 - \sum_{k\geq1} e_k$, and then $De_0$ must be diffuse or $e_0 = 0$. Then two possibilities occur; namely $v(e_k\otimes1)\neq0$ for some $k \geq 1$ or not. We first prove that the former case is impossible thanks to Proposition \ref{P2}, while the latter case can easily be handled thanks to \cite[Proposition 5.3]{HoudayerRicard:AdvMath11}.  

\medskip
Assume that $w:=v(e_k\otimes1)\neq0$ for some $k \geq 1$. In this case, $\mathcal{N}_{p\widetilde{M}p}(A)''$ has no type I direct summand. Remark that $e_k\otimes1 \in D\,\bar{\otimes}\,L(\mathbb{R}) = B$. Thus $w^* Aw = Bw^* w \subseteq w^* w(M\rtimes_{\sigma^\varphi}\mathbb{R})w^* w = w^* w(e_k M e_k \rtimes_{\sigma^{\varphi_{e_k}}}\mathbb{R})w^* w$, where we define $\varphi_{e_k} := \varphi\!\upharpoonright_{e_k M e_k}$ so that $\sigma_t^{\varphi_{e_k}} = \sigma_t^\varphi\!\upharpoonright_{e_k M e_k}$ for every $t \in \mathbb{R}$ because $e_k \in D \subseteq M_\varphi$. Note that $Bw^* w = (De_k\rtimes_{\sigma^{\varphi_{e_k}}}\mathbb{R})w^* w = (\mathbb{C}e_k\,\bar{\otimes}\,L(\mathbb{R}))w^* w = (\mathbb{C}e_k\rtimes_{\sigma^{\varphi_{e_k}}}\mathbb{R})w^* w$. By the hypothesis here $\mathcal{N}_{ww^*\widetilde{M}ww^*}(Aww^*)'' = ww^*(\mathcal{N}_{p\widetilde{M}p}(A)'')ww^*$ (see e.g.~\cite[Proposition 2.7]{HoudayerRicard:AdvMath11}) is non-commutative, and hence the normalizer of $(\mathbb{C}e_k\rtimes_{\sigma^{\varphi_{e_k}}}\mathbb{R})w^* w$ in $w^* w(e_k M e_k \rtimes_{\sigma^{\varphi_{e_k}}}\mathbb{R})w^* w$ does not sit inside $(\mathbb{C}e_k\rtimes_{\sigma^{\varphi_{e_k}}}\mathbb{R})w^* w$ itself. By e.g.~\cite[Lemma 4.1 (i)]{Voeden:PLMS73} one observes that $(e_k M e_k)_{\varphi_{e_k}} = (\mathbb{C}e_k)' \cap (e_k M e_k)_{\varphi_{e_k}} = (De_k)' \cap (e_k M e_k)_{\varphi_{e_k}} = (De_k)' \cap e_k M_\varphi e_k = (D' \cap M_\varphi)e_k = De_k = \mathbb{C}e_k$. Therefore, by Proposition \ref{P2}, $\mathbb{C}e_k\rtimes_{\sigma^{\varphi_{e_k}}}\mathbb{R}$ is a singular MASA in $e_k M e_k \rtimes_{\sigma^{\varphi_{e_k}}}\mathbb{R}$, and thus by e.g.~\cite[Proposition 2.7]{HoudayerRicard:AdvMath11} again so is $(\mathbb{C}e_k\rtimes_{\sigma^{\varphi_{e_k}}}\mathbb{R})w^* w$ in $w^* w(e_k M e_k \rtimes_{\sigma^{\varphi_{e_k}}}\mathbb{R})w^* w$, a contradiction.    

\medskip
We then treat with the remaining case; namely $v(e_k\otimes1)=0$ for all $k \geq 1$, that is, $v^* v \leq e_0 \otimes1$. This case was essentially treated in the proof of \cite[Theorem D (1)]{HoudayerRicard:AdvMath11}. One has $(De_0\,\bar{\otimes}\,\mathbb{C}1_{L(\mathbb{R})})v^* v \subseteq B v^* v = v^* A v \cong_{\mathrm{Ad}v} A vv^* \preceq_{\widetilde{M}} \mathbb{C}1\rtimes_{\sigma^\varphi}\mathbb{R}$ so that $(De_0\,\bar{\otimes}\,\mathbb{C}1_{L(\mathbb{R})})v^* v \preceq_{\widetilde{M}} \mathbb{C}1\rtimes_{\sigma^\varphi}\mathbb{R}$ holds. Set $N:=De_0 \oplus e_0^\perp M e_0^\perp$ of $M$, a diffuse von Neumann subalgebra of $M$. Since $v^* v \leq e_0\otimes1$ and $v^* v \in B=D\,\bar{\otimes}\,L(\mathbb{R})$, one easily sees that $v^* v$ falls into $N' \cap \widetilde{M}$ via the canonical embedding $N \subseteq M \hookrightarrow  M\rtimes_{\sigma^\varphi}\mathbb{R} = \widetilde{M}$. By \cite[Proposition 5.3]{HoudayerRicard:AdvMath11} we obtain that $(De_0\,\bar{\otimes}\,\mathbb{C}1_{L(\mathbb{R})})v^* v\,(= Nv^* v)\,\npreceq_{\widetilde{M}} \mathbb{C}1 \rtimes_{\sigma^{\varphi}}\mathbb{R}$, a contradiction. Hence we are done. 
\end{proof}

\section{Remarks} 

The next free product counterpart of \cite[Theorem D (1)]{HoudayerRicard:AdvMath11} holds thanks to Lemma \ref{L3}.   

\begin{proposition}\label{P4}
If both $M_1,M_2$ are hyperfinite, $M_c$ is of type III$_1$ and $e \in \widetilde{M_c}$ a non-zero finite projection, then the normalizer of any MASA in $e\widetilde{M_c}e$ generates a hyperfinite von Neumann subalgebra.
\end{proposition}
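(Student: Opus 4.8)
The plan is to argue by contradiction, isolating the part of the normalizer algebra that fails to be hyperfinite and then playing the intertwining dichotomy in (I) against Lemma~\ref{L3} on one and the same corner. As in \S2 I would first pass to $M := M_c$, which is then a type~III$_1$ factor whose two free components remain hyperfinite; consequently $\widetilde{M} = \widetilde{M_c}$ is a type~II$_\infty$ factor to which both (I) and Lemma~\ref{L3} apply. Since $\widetilde{M}$ is a factor, every finite projection is unitarily equivalent to one lying in the diffuse abelian algebra $\mathbb{C}1\rtimes_{\sigma^\varphi}\mathbb{R}$, and because unitary conjugation preserves MASAs, their normalizers and hyperfiniteness, I would replace $e$ and the given MASA by a unitary conjugate so that $e \in \mathbb{C}1\rtimes_{\sigma^\varphi}\mathbb{R}$. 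Writing $A$ for the MASA in $e\widetilde{M}e$ and $\mathcal{P} := \mathcal{N}_{e\widetilde{M}e}(A)''$, one has $1_{\mathcal{P}} = e$ and, since $A$ is maximal abelian, $Z(\mathcal{P}) \subseteq A' \cap e\widetilde{M}e = A$.

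Next I would split off the non-hyperfinite summand. Let $z_0 \in Z(\mathcal{P}) \subseteq A$ be the largest central projection with $\mathcal{P}z_0$ hyperfinite and set $z := e - z_0$; if $z = 0$ then $\mathcal{P} = \mathcal{P}z_0$ is hyperfinite and we are done, so suppose $z \neq 0$. By maximality of $z_0$ the algebra $\mathcal{P}z$ has no hyperfinite direct summand, hence a fortiori no type~I direct summand. Cutting down, $Az$ is a MASA in $z\widetilde{M}z$ and $\mathcal{N}_{z\widetilde{M}z}(Az)'' = \mathcal{P}z$ by e.g.~\cite[Proposition 2.7]{HoudayerRicard:AdvMath11}. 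Now $Az$ is abelian, hence hyperfinite, it sits in $z\widetilde{M}z \subseteq e\widetilde{M}e$ with $e \in \mathbb{C}1\rtimes_{\sigma^\varphi}\mathbb{R}$ and $\mathrm{Tr}(z) \leq \mathrm{Tr}(e) < \infty$, and its normalizer algebra $\mathcal{P}z$ has no hyperfinite direct summand; so the consequence of (I) applied with $P := Az$ yields $Az \preceq_{\widetilde{M}} \mathbb{C}1\rtimes_{\sigma^\varphi}\mathbb{R}$. On the other hand, applying Lemma~\ref{L3} to the finite projection $z$ and the MASA $Az \subseteq z\widetilde{M}z$---whose normalizer algebra $\mathcal{P}z$ has no type~I direct summand---gives $Az \not\preceq_{\widetilde{M}} \mathbb{C}1\rtimes_{\sigma^\varphi}\mathbb{R}$. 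This contradiction forces $z = 0$, proving that $\mathcal{P}$ is hyperfinite.

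The crux is precisely to arrange that the two imported results bear on the same corner: the passage to the non-hyperfinite summand $z$ is what makes the hypothesis of the (I)-dichotomy (``no hyperfinite direct summand'') hold, while at the same time furnishing the hypothesis of Lemma~\ref{L3} (``no type~I direct summand'', which is weaker and therefore automatic here). The type~III$_1$ assumption enters only to guarantee that $\widetilde{M}$ is a factor, which is what permits normalizing $e$ into $\mathbb{C}1\rtimes_{\sigma^\varphi}\mathbb{R}$ as (I) requires; for type~III$_\lambda$ with $\lambda \neq 1$ the core is no longer a factor and this reduction would need extra care. The remaining points---that cutting a MASA by a central projection of its normalizer is again a MASA, and that a von Neumann algebra with no hyperfinite direct summand has no type~I direct summand---are routine.
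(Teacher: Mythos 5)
Your proof is correct and takes essentially the same approach as the paper's: argue by contradiction, use that the center of the normalizer algebra lies in the MASA to cut away the hyperfinite direct summand, use factoriality of the type II$_\infty$ core (the only place type III$_1$ enters) to position the relevant projection inside $\mathbb{C}1\rtimes_{\sigma^\varphi}\mathbb{R}$, and then play the dichotomy (I) against Lemma~\ref{L3}. The only cosmetic differences are that the paper keeps $\widetilde{M_c}$ as a direct summand of $\widetilde{M}$ instead of invoking the reduction to $M=M_c$, and it cuts $e$ down (inside $C$) \emph{before} moving it to $\mathbb{C}1\rtimes_{\sigma^\varphi}\mathbb{R}$ rather than after, as you do; both orderings work because (I) allows a non-unital subalgebra $P \subseteq p\widetilde{M}p$.
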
 
\begin{proof} We work inside $\widetilde{M}$, and note that $\widetilde{M}_c$ is a direct summand of $\widetilde{M}$ with $1_{\widetilde{M_c}} = 1_{M_c}\otimes 1_{L(\mathbb{R})} \in \mathcal{Z}(\widetilde{M})$. Suppose, on the contrary, that there exists a MASA $C$ in $e\widetilde{M_c}e$ such that $\mathcal{N}_{e\widetilde{M_c}e}(C)''$ is not hyperfinite. The center of $\mathcal{N}_{e\widetilde{M_c}e}(C)''$ sits inside $C$, and thus replacing $e$ with a smaller non-zero projection in $C$ if necessary we may and do also assume that $\mathcal{N}_{e\widetilde{M_c}e}(C)''$ has no hyperfinite direct summand. Since $\widetilde{M_c}$ is a factor, $\mathbb{C}1_{M_c}\rtimes_{\sigma^{\varphi_c}}\mathbb{R}$ diffuse and $\mathrm{Tr}\!\upharpoonright_{\mathbb{C}1_{M_c}\rtimes_{\sigma^{\varphi_c}}\mathbb{R}}$ semifinite, the projection $e$ is equivalent to a non-zero projection $1_{M_c}\otimes e_0 \in \mathbb{C}1_{M_c}\rtimes_{\sigma^\varphi}\mathbb{R}$; hence we may and do further assume that there exists a projection $f = 1\otimes e_0 \in \mathbb{C}1\rtimes_{\sigma^\varphi}\mathbb{R}$ such that $\mathrm{Tr}(f)$ is finite and $e = 1_{M_c}\otimes e_0 = 1_{\widetilde{M_c}}f$. The known fact summarized as (I) in the proof of Theorem \ref{T1} shows $C \preceq_{\widetilde{M}} \mathbb{C}1\rtimes_{\sigma^\varphi}\mathbb{R}$, a contradiction due to Lemma \ref{L3}. 
\end{proof} 
   
Proposition \ref{P2} and \cite[Remark 5.4]{HoudayerRicard:AdvMath11} precisely show that the von Neumann subalgebras generated by the quasi-normalizer and the (groupoid) normalizer of $N\rtimes_{\sigma^\psi}\mathbb{R} \supset \mathbb{C}1\rtimes_{\sigma^\psi}\mathbb{R}$ are different in general. Thus one may expect that the continuous core of any `type III$_1$ free product factor' has no regular diffuse hyperfinite von Neumann subalgebra. This is unclear at the moment of this writing, but the next weaker assertion follows directly from \cite{HoudayerRicard:AdvMath11},\cite{BoutonnetHoudayerRaum:Preprint12}. 

\begin{proposition}\label{P5} 
If $M_c$ is of type III$_1$ and $e \in \widetilde{M_c}$ a non-zero finite projection, then $e\widetilde{M_c}e$ has no regular, hyperfinite type II$_1$ von Neumann algebra. 
\end{proposition}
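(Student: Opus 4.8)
The plan is to assume for contradiction that $e\widetilde{M_c}e$ contains a regular hyperfinite type II$_1$ von Neumann subalgebra $P$, so that $\mathcal{N}_{e\widetilde{M_c}e}(P)'' = e\widetilde{M_c}e$, and then to force an intertwining $P \preceq_{\widetilde{M}} \mathbb{C}1\rtimes_{\sigma^\varphi}\mathbb{R}$. Since $\mathbb{C}1\rtimes_{\sigma^\varphi}\mathbb{R} = \mathbb{C}1\,\bar{\otimes}\,L(\mathbb{R})$ is abelian whereas $P$ is of type II$_1$, such an intertwining is impossible, and this contradiction finishes the proof.

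First I would reduce to the situation $M = M_c$ exactly as in the proof of Theorem \ref{T1}, so that $\widetilde{M} = \widetilde{M_c}$ becomes a non-hyperfinite factor of type II$_\infty$ and $e\widetilde{M_c}e$ a non-hyperfinite type II$_1$ factor; in particular $\mathcal{N}_{e\widetilde{M_c}e}(P)'' = e\widetilde{M_c}e$ has no hyperfinite direct summand. Because $\mathbb{C}1\rtimes_{\sigma^\varphi}\mathbb{R}$ is diffuse with semifinite trace inside the factor $\widetilde{M_c}$, the finite projection $e$ is equivalent to, and hence after conjugation may be taken to lie in, a finite projection of $\mathbb{C}1\rtimes_{\sigma^\varphi}\mathbb{R}$ (this is the same maneuver as in the proof of Proposition \ref{P4}). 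Now $P$ is a unital, regular, hyperfinite subalgebra of $e\widetilde{M}e$ with $e \in \mathbb{C}1\rtimes_{\sigma^\varphi}\mathbb{R}$ finite, and its normalizer generates the non-hyperfinite factor $e\widetilde{M}e$. Applying the fact (I) when both $M_1, M_2$ are hyperfinite, or the fact (II) when one of them has no hyperfinite direct summand (both recalled in the proof of Theorem \ref{T1}), I obtain $P \preceq_{\widetilde{M}} \mathbb{C}1\rtimes_{\sigma^\varphi}\mathbb{R}$.

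It remains to extract the contradiction. By definition of $\preceq_{\widetilde{M}}$ there are a finite projection $q \in \mathbb{C}1\rtimes_{\sigma^\varphi}\mathbb{R}$, an integer $n$, and a non-zero normal $*$-homomorphism $\pi$ from $P$ into $M_n\big(q(\mathbb{C}1\,\bar{\otimes}\,L(\mathbb{R}))q\big)$. The corner $q(\mathbb{C}1\,\bar{\otimes}\,L(\mathbb{R}))q$ is abelian, so the target is a homogeneous type I$_n$ von Neumann algebra, which satisfies the standard polynomial identity of degree $2n$. On the other hand $\pi$ is non-zero (the implementing partial isometry $y$ satisfies $y = y\pi(1_P)$, so $\pi(1_P)\neq0$), whence $\pi(P)$ is a non-zero quotient of the type II$_1$ algebra $P$ and is therefore again of type II$_1$; such an algebra contains unital matrix subalgebras of arbitrarily large size and hence satisfies no polynomial identity. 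This contradicts $\pi(P) \subseteq M_n\big(q(\mathbb{C}1\,\bar{\otimes}\,L(\mathbb{R}))q\big)$ and completes the argument. I expect the only delicate point to be the bookkeeping of the reductions guaranteeing that the hypotheses of (I) and (II) are genuinely met — in particular placing $e$ inside $\mathbb{C}1\rtimes_{\sigma^\varphi}\mathbb{R}$ and verifying that the normalizer has no hyperfinite direct summand; the concluding type II$_1$-versus-abelian step is soft.
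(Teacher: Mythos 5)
Your proposal is correct and follows essentially the same route as the paper's own (very terse) proof: reduce to $M = M_c$ with $e \in \mathbb{C}1\rtimes_{\sigma^\varphi}\mathbb{R}$ and $M_i$ without hyperfinite direct summand, invoke the facts (I)/(II) from the proof of Theorem \ref{T1} to get $P \preceq_{\widetilde{M}} \mathbb{C}1\rtimes_{\sigma^\varphi}\mathbb{R}$, and conclude that this is impossible. The only difference is that you spell out why the intertwining is impossible (a type II$_1$ algebra admits no non-zero normal $*$-homomorphism into matrices over an abelian algebra), a point the paper leaves implicit with ``which is impossible,'' and your justification of it is sound.
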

\begin{proof} We may and assume that $M = M_c$, $e \in \mathbb{C}1\rtimes_{\sigma^{\varphi}}\mathbb{R}$, and that $M_i$ has no hyperfinite direct summand as long as it is not hyperfinite. If $P$ is a regular hyperfinite type II$_1$ von Neumann algebra of $e\widetilde{M}e$, then as in Theorem \ref{T1} one gets $P \preceq_{\widetilde{M}} \mathbb{C}1\rtimes_{\sigma^\varphi}\mathbb{R}$, which is impossible.   
\end{proof}

A free product counterpart of \cite[Theorem 1.2]{Houdayer:JIMJ10} was also implicitly shown in \cite{ChifanHoudayer:DukeMathJ10} as follows. 

\begin{proposition}\label{P6} If both $M_1,M_2$ are hyperfinite, $M_c$ is of type III$_1$ and $e \in \widetilde{M_c}$ a non-zero finite projection, then the relative commutant of any type II$_1$ von Neumann subalgebra in $e\widetilde{M}_ce$ is hyperfinite.
\end{proposition}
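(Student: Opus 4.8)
The plan is to deduce the statement from the relative solidity of the amalgamated free product underlying $\widetilde{M}$, which is exactly the sort of consequence implicit in \cite{ChifanHoudayer:DukeMathJ10}. First I would carry out the reductions of \S2: after cutting by a central projection I may assume $M = M_c$ is a type III$_1$ factor with both $M_1, M_2$ hyperfinite, and, using that $\widetilde{M}$ is then a factor, I may move the given finite projection so that $e \in \mathbb{C}1\rtimes_{\sigma^\varphi}\mathbb{R}$, as in the proof of Proposition \ref{P4}. I would then record from \cite[Theorem 5.1]{Ueda:PacificJMath99} that $\widetilde{M} = \widetilde{M_1}\star_{L(\mathbb{R})}\widetilde{M_2}$ is an amalgamated free product over the diffuse \emph{abelian} subalgebra $\mathbb{C}1\rtimes_{\sigma^\varphi}\mathbb{R} = L(\mathbb{R})$, whose building blocks $\widetilde{M_i} = M_i\rtimes_{\sigma^{\varphi_i}}\mathbb{R}$ are amenable since the $M_i$ are hyperfinite and $\mathbb{R}$ is amenable.

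Now let $N \subseteq e\widetilde{M}e$ be of type II$_1$ and put $R := N'\cap e\widetilde{M}e$. The decisive elementary observation is that $N\not\preceq_{\widetilde{M}}\mathbb{C}1\rtimes_{\sigma^\varphi}\mathbb{R}$, since an intertwining of a finite von Neumann algebra into an abelian one would produce a nonzero type I direct summand, which a type II$_1$ algebra does not have. I would then feed $N$ into the malleable-deformation/spectral-gap machinery already imported in (I) of the proof of Theorem \ref{T1}, i.e.\ the Ioana--Peterson--Popa deformation of the amalgamated free product together with the weak amenability of $\widetilde{M}$ inherited from the hyperfiniteness of the $M_i$, exactly as in the proof of \cite[Theorem 4.2]{ChifanHoudayer:DukeMathJ10}. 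The resulting Ozawa--Popa dichotomy splits into two cases: either $N\preceq_{\widetilde{M}}\widetilde{M_i}$ for some $i$, or $R$ is amenable relative to $\widetilde{M_i}$ inside $\widetilde{M}$. In the second case the amenability of $\widetilde{M_i}$ and the transitivity of relative amenability \cite{OzawaPopa:AnnMath10} immediately give that $R$ is hyperfinite. In the first case I note that $R \subseteq \mathcal{N}_{e\widetilde{M}e}(N)''$ (every element of $R$ commutes with $N$, hence normalizes it) and invoke the normalizer control for amalgamated free products, valid because $N\not\preceq_{\widetilde{M}} L(\mathbb{R})$, to obtain $\mathcal{N}_{e\widetilde{M}e}(N)''\preceq_{\widetilde{M}}\widetilde{M_i}$; as $\widetilde{M_i}$ is hyperfinite, a corner of $R$ is then hyperfinite, and this propagates to $R$ itself.

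The hard part is precisely this first branch: turning $N\preceq_{\widetilde{M}}\widetilde{M_i}$ into hyperfiniteness of the \emph{relative commutant} $R$, rather than merely of the normalizer algebra, without losing a direct summand along the way. This requires the relative-amenability bookkeeping of \cite{ChifanHoudayer:DukeMathJ10} and \cite{BoutonnetHoudayerRaum:Preprint12} — controlling how a corner of $R$ sits relative to $\widetilde{M_i}$ and promoting a hyperfinite corner with full central support to a hyperfinite direct summand. Once this relative-solidity package is granted, the elementary input $N\not\preceq_{\widetilde{M}} L(\mathbb{R})$ together with the upgrade from relative amenability to genuine amenability finish the proof; this is the sense in which the assertion is implicit in \cite{ChifanHoudayer:DukeMathJ10}.
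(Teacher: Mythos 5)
You have assembled the right ingredients (the semifinite amalgamated free product picture of $\widetilde{M}$ with hyperfinite legs $\widetilde{M_i}$, the observation that a type II$_1$ algebra never intertwines into the abelian edge $\mathbb{C}1\rtimes_{\sigma^\varphi}\mathbb{R}$, normalizer control, and the upgrade from relative amenability over an amenable algebra to genuine amenability), but there is a genuine gap, and it sits exactly where you flag it. The spectral-gap machinery does not yield the dichotomy in the form you state: its honest output is ``either $N \preceq_{\widetilde{M}} \widetilde{M_i}$ for some $i$, or some non-zero \emph{direct summand} of $R=N'\cap e\widetilde{M}e$ is amenable relative to $\widetilde{M_i}$,'' not amenability of all of $R$. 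Likewise, in your first branch, $R \subseteq \mathcal{N}_{e\widetilde{M}e}(N)'' \preceq_{\widetilde{M}} \widetilde{M_i}$ produces only a hyperfinite corner of $R$ with no control on its central support, so nothing ``propagates to $R$ itself.'' Thus both branches terminate at the same unproved implication (hyperfinite summand $\Rightarrow$ hyperfinite), which you defer to an unspecified ``relative-solidity package.'' Note also that \cite[Theorem 4.2]{ChifanHoudayer:DukeMathJ10} cannot take $N$ itself as input: its hypothesis is an algebra with no amenable direct summand, and a type II$_1$ algebra $N$ may perfectly well be hyperfinite.

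The missing step is not deep bookkeeping but a localization trick, and it is how the paper closes the argument. Suppose $R$ is not hyperfinite and choose a non-zero $z\in\mathcal{Z}(R)$ such that $P:=Rz=(Nz)'\cap z\widetilde{M}z$ has \emph{no} hyperfinite direct summand; after this cut, every summand-type conclusion becomes an outright contradiction. The paper then feeds $P$ (not $N$) into \cite[Theorem 4.2]{ChifanHoudayer:DukeMathJ10} --- legitimate, since $P$ has no hyperfinite summand by construction --- obtaining $P'\cap z\widetilde{M}z\preceq_{\widetilde{M}}\widetilde{M_{i_0}}$; your type II$_1$ obstruction, applied to $Nz\subseteq P'\cap z\widetilde{M}z$, rules out $P'\cap z\widetilde{M}z\preceq_{\widetilde{M}}\mathbb{C}1\rtimes_{\sigma^\varphi}\mathbb{R}$, so the normalizer control of \cite[Proposition 2.7]{BoutonnetHoudayerRaum:Preprint12} yields $P\subseteq\mathcal{N}_{z\widetilde{M}z}(P'\cap z\widetilde{M}z)''\preceq_{\widetilde{M}}\widetilde{M_{i_0}}$, a hyperfinite algebra, contradicting the choice of $z$. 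So your ideas can be salvaged, but only by performing the central cut at the outset and applying the solidity theorem to the (cut-down) relative commutant rather than to $N$; the reductions you begin with ($M=M_c$, moving $e$ into $\mathbb{C}1\rtimes_{\sigma^\varphi}\mathbb{R}$) are not needed for this.
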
 
\begin{proof} Suppose, on the contrary, that there exists a type II$_1$ von Neumann subalgebra $N$ of $e\widetilde{M_c}e = e\widetilde{M}e$ (since $\widetilde{M_c}$ is a direct summand of $\widetilde{M}$) such that $N'\cap e\widetilde{M}e$ is not hyperfinite. One can choose a non-zero projection $z \in \mathcal{Z}(N'\cap e\widetilde{M}e)$ so that $P := (Nz)'\cap z\widetilde{M}z = (N'\cap e\widetilde{M}e)z$ (see e.g.~\cite[Lemma 4.1 (ii)]{Voeden:PLMS73}) has no hyperfinite direct summand.  By \cite[Theorem 4.2]{ChifanHoudayer:DukeMathJ10} $P'\cap z\widetilde{M}z \preceq_{\widetilde{M}} \widetilde{M_{i_0}}$ for some $i_0$, but $Nz \subseteq P'\cap z\widetilde{M}z \preceq_{\widetilde{M}} \mathbb{C}1\rtimes_{\sigma^\varphi}\mathbb{R}$ is impossible; hence by \cite[Proposition 2.7]{BoutonnetHoudayerRaum:Preprint12} $P \subseteq \mathcal{N}_{z\widetilde{M}z}(P'\cap z\widetilde{M}z)'' \preceq_{\widetilde{M}} \widetilde{M_{i_0}}$, a contradiction. 
\end{proof}

\section*{Acknowledgement} The most crucial part of this work was carried out during the author's stay at Hokkaido University in Dec.~4--7, 2013. We thank Reiji Tomatsu for his hospitality.

\end{document}